\DeclareMathOperator{\p}{p}
\DeclareMathOperator{\pr}{pr}
\DeclareMathOperator{\dist}{dist}
\DeclareMathOperator{\conv}{conv}
\DeclareMathOperator{\aff}{aff}
\DeclareMathOperator{\vol}{vol}
\newcounter{iii}                                            
\theoremstyle{plain}
\newtheorem{theo}{Theorem}
\newtheorem{lem}{Lemma}
\theoremstyle{definition}
\author{A.~Polyanskii}
\title{Pairwise intersecting homothets of a convex body
}
\address{Moscow Institute of Physics and Technology, Technion, Institute for Information Transmission Problems RAS.}
\email{alexander.polyanskii@yandex.ru}
\date{\today}
\begin{document}
\begin{abstract}
We show that the maximum number of pairwise intersecting positive homothets of a $d$-dimensional centrally symmetric convex body, none of which contains the center of another in its interior, is at most $3^{d+1}$. Also, we improve upper bounds for cardinalities of  $k$-distance sets in Minkowski spaces.
\end{abstract}
\maketitle

\section{Introduction}

A \emph{convex body} $K$ in the $d$-dimensional Euclidean space $\mathbb R^d$ is a compact convex set with non-empty interior, and it is \emph{$o$-symmetric} if $K = -K$. A \emph{homothet} of $K$ is a set of the form $\mathbf v+\lambda K := \{\mathbf v+ \lambda \mathbf k : \mathbf k \in K \}$, where $\lambda\in\mathbb R$ is the homothety ratio, and $\mathbf v\in \mathbb R^d$ is a translation vector. A homothet of $K$ is called \emph{positive} if its homothety ratio is positive. We will consider only positive homothets of $o$-symmetric bodies here, and thus we will omit the word "positive" most of the time. Also, we write $[n]$ for the set $\{1,2,\dots, n\}$, $\dist(h_1,h_2)$ for the Euclidean distance between two parallel hyperplanes $h_1$ and $h_2$, $\dim(h)$ for the dimension of a flat $h$. $\conv(A)$, $\aff(A)$, $\vol(A)$ and $\partial A$ stand for the convex hull, the affine hull, the volume and the boundary of a set $A\subset \mathbb R^d$ respectively.

\emph{A Minkowski arrangement} of an $o$-symmetric convex body $K$ is called a family $\{\mathbf v_i + \lambda_i K\}$ of positive homothets of $K$ such that none of the homothets contains the center of any other homothet in its interior (see \cite{FT67}). We write $\kappa(K)$ for the largest number of homothets that a pairwise intersecting Minkowski arrangement of $K$ can have. Z.~F\"uredi and P.A.~Loeb \cite{FL94} proved that $\kappa(K)\leq 5^d$. Recently, M.~Nasz\'odi, J.~Pach and K.~Swanepoel~\cite{NPS16} improved this result to $\kappa(K)\leq O(3^d d\log d)$. The authors of~\cite{NPS16} noted that it is obvious that for the $d$-dimensional cube $C^d$ we have $\kappa(C^d)=3^d$. We prove the following upper bound for $\kappa(K)$, which is sharp up to the constant factor.

\begin{theo} \label{thm:symmetric}
For any $d$-dimensional $o$-symmetric convex body $K$,
\[
\kappa(K)\leq 3^{d+1}.
\]
\end{theo}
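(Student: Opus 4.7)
The plan is to combine the classical F\"uredi--Loeb ``half-shrinking'' packing observation with a size-based decomposition of the arrangement, then to sharpen the resulting estimate by a geometric partition of the shell $2K\setminus\mathrm{int}(K)$ into $3^d$ cells.

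\textbf{Setup.} Relabel so that $\lambda_1=\max_i\lambda_i$, and apply an affine rescaling to take $\lambda_1=1$ and $\mathbf{v}_1=\mathbf{0}$. The intersection $B_1\cap B_i\neq\emptyset$ together with $K=-K$ gives $\mathbf{v}_i\in(1+\lambda_i)K\subseteq 2K$, and the Minkowski condition for the pair $(1,i)$ gives $\mathbf{v}_i\notin\mathrm{int}(K)$; thus every centre lies in $2K\setminus\mathrm{int}(K)$. The central packing lemma I would prove next is that the half-sized homothets $\tfrac12 B_i:=\mathbf{v}_i+\tfrac{\lambda_i}{2}K$ have pairwise disjoint interiors: assuming $\lambda_i\ge\lambda_j$ and that a common interior point existed, writing the overlap as a convex combination and using $K=-K$ yields
\[
\mathbf{v}_j-\mathbf{v}_i\in\tfrac{\lambda_i+\lambda_j}{2}\mathrm{int}(K)\subseteq\lambda_i\,\mathrm{int}(K),
\]
contradicting the Minkowski condition for $(i,j)$.

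\textbf{Volumetric bound and dyadic decomposition.} Since $\tfrac12 B_i\subseteq(1+\tfrac{3\lambda_i}{2})K\subseteq\tfrac52 K$, comparing volumes of the disjoint shrunken bodies gives the weight inequality $\sum_i\lambda_i^d\le 5^d$. This recovers the F\"uredi--Loeb bound $n\le 5^d$ in the equal-radius case but does not bound $n$ itself when the $\lambda_i$'s may be arbitrarily small. To turn this weight inequality into a count, I would partition the indices into dyadic size-classes $L_k:=\{i:2^{-k}<\lambda_i\le 2^{-k+1}\}$. Inside a single $L_k$, pairwise intersection confines every pair of centres to Minkowski distance $\le 2^{-k+2}$, so all centres of $L_k$ sit in a common translate $\mathbf{v}_{i_0}+2^{-k+2}K$; the disjoint half-shrunken bodies of $L_k$ then lie inside a translate of $5\cdot 2^{-k}K$, giving the $k$-uniform estimate $|L_k|\le 10^d$.

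\textbf{Sharpening to $3^{d+1}$.} The per-class bound alone does not sum to a finite total, so a decisive last step is needed. The plan is to construct a partition of $2K\setminus\mathrm{int}(K)$ into $3^d$ geometric cells built from $d$ pairs of parallel supporting hyperplanes of $K$ (mimicking the extremal $\{-1,0,1\}^d$ configuration for the cube $C^d$, where equality $\kappa(C^d)=3^d$ is achieved), and to show that any single cell can contain at most $3$ centres of the arrangement. Summing these cell bounds yields the desired $n\le 3\cdot 3^d=3^{d+1}$.

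\textbf{Main obstacle.} All steps except the final tightening are routine: the half-shrinking packing lemma is classical and the dyadic decomposition is standard bookkeeping. The hard ingredient, and what distinguishes $3^{d+1}$ from the weaker $5^d$ of F\"uredi--Loeb and the $O(3^d d\log d)$ of Nasz\'odi--Pach--Swanepoel, is the construction of the $3^d$-cell partition of the shell for an arbitrary $o$-symmetric $K$ (with no Euclidean structure to rely on), together with the at-most-$3$ per-cell count, which must jointly exploit pairwise intersection and the Minkowski condition across different size-classes.
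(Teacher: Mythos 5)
There is a genuine gap, and it is exactly where you locate it yourself: the entire passage from the (correct but insufficient) volumetric bookkeeping to the bound $3^{d+1}$ is left as a plan. Your half-shrinking lemma, the inequality $\sum_i\lambda_i^d\le 5^d$ and the per-class estimate $|L_k|\le 10^d$ are all fine, but, as you note, they do not bound $n$; so the theorem rests entirely on the unproved ``sharpening'' step. Worse, that step as formulated cannot work. If the $3^d$ cells are built from $d$ pairs of parallel supporting hyperplanes of $K$ (hence fixed once $K$ is fixed), then the claim ``at most $3$ centres per cell'' is false: take $K$ itself (ratio $1$, centre $\mathbf o$) together with a copy of the extremal configuration shrunk by a factor $\varepsilon$ and translated to a boundary point --- for the cube, the $3^d-1$ homothets $\mathbf c+\varepsilon\mathbf u+\varepsilon C^d$ with $\mathbf u\in\{-1,0,1\}^d$, $\mathbf c$ a vertex of $C^d$, keeping only those whose centres avoid $\mathrm{int}(C^d)$. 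This is a pairwise intersecting Minkowski arrangement in which $3^d-1$ centres lie inside an $\varepsilon$-neighbourhood of a single point of the shell, so for small $\varepsilon$ some cell of any fixed partition into $3^d$ cells contains far more than $3$ centres as soon as $d\ge 3$ (and an adaptive partition would have to handle such clusters at every scale, i.e.\ you are back to the original problem recursively). So the per-cell count of $3$, which is the arithmetic engine of your bound, is not salvageable in this form; small homothets clustering near one point are precisely the difficulty that the dyadic classes do not resolve.

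For comparison, the paper takes a completely different route that avoids any decomposition of $\mathbb R^d$: each centre $\mathbf v_i$ is lifted to the point $\mathbf x_i=\mathbf v_i+\lambda_i\mathbf e_{d+1}$ in a $(d+1)$-dimensional space, and after a projective (central) projection one shows that for every pair $i\ne j$ the whole lifted point set lies in a slab between two parallel hyperplanes whose width is at most twice the distance between the parallel hyperplanes through the $i$-th and $j$-th points (Lemmas~\ref{lem:slab}--\ref{lem:leq2}, using the Minkowski condition only through the elementary estimate $2\lambda_i\lambda_j/(\lambda_iu_j+\lambda_ju_i)\le 2$). Then the generalized Danzer--Gr\"unbaum volume argument (Lemma~\ref{lem:general}) with $\lambda=2$, applied in dimension $d+1$, gives $n\le(1+2)^{d+1}=3^{d+1}$ in one stroke, with no case analysis over size classes. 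If you want to complete your write-up, you would need either to discover such a lifting/slab mechanism yourself or to find a genuinely different replacement for the per-cell bound; the steps you have written down so far only reprove the $5^d$-type estimates.
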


Also, some generalization of a Minkowski arrangement for non-symmetric bodies (the role of the center is
played by an arbitrary interior point) was studied in \cite{NPS16}. Unfortunately, it is impossible to generalize our approach for non-symmetric bodies.

We call a subset $S$ of a metric space a \emph{$k$-distance set} if the set of non-zero distances occurring between points of $S$ is of size at most $k$. A $1$-distance set is called an \emph{equilateral set}. For $d$-dimensional Minkowski spaces it is well known that the maximal cardinality of an equilateral (that is, a $1$-distance) set is $2^d$ with equality  iff the unit ball of the space is a parallelotope, see~\cite{P71}. K.~Swanepoel~\cite{S07} proved that if the unit ball of a $d$-dimensional Minkowski space is a parallelotope then a $k$-distance set has cardinality at most $(k+1)^{d}$, where the bound is tight. Therefore, he \cite{S07} conjectured that the maximal cardinality of $k$-distance sets in Minkowski spaces is $(k+1)^d$. Also, it was proved in~\cite{S07} that the cardinality of a $k$-distance set in a $d$-dimensional Minkowski space is at most $\min\{2^{kd}, (k+1)^{(11^d-9^d)/2}\}$. Moreover, the last bound was recently replaced by $(k+1)^{5^{d+o(d)}}$, see~\cite{S16}. Our second result is the following improvement.
\begin{theo}\label{thm:kdistancessets}
The cardinality of a $k$-distance set ($k>1$) in a $d$-dimensional Minkowski space is at most $k^{O(3^dd)}$, where the constant in $O(\cdot)$ does not depend on $d$ and $k$.
\end{theo}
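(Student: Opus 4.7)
Write the $k$ distances occurring in $S$ as $d_1<\cdots<d_k$, and for each $i\in[k]$ let $G_i$ be the graph on vertex set $S$ whose edges are the pairs $\{p,q\}$ with $\|p-q\|_K=d_i$. The graphs $G_1,\dots,G_k$ together cover the complete graph on $S$.

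The first step is to show that $\omega(G_i)\le\kappa(K)\le 3^{d+1}$ for every $i$. Given a clique $\{p_1,\dots,p_m\}$ of $G_i$, I consider the positive homothets $p_j+(d_i/2)K$, $j\in[m]$. Any two of them meet at the midpoint of the corresponding centers, which is at $K$-distance exactly $d_i/2$ from each (since $\|p_j-p_{j'}\|_K=d_i$); moreover no center $p_{j'}$ lies in the interior of $p_j+(d_i/2)K$ because $\|p_j-p_{j'}\|_K=d_i>d_i/2$. Hence these homothets form a pairwise intersecting Minkowski arrangement, and Theorem~\ref{thm:symmetric} gives $m\le 3^{d+1}$.

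The second step is a greedy peeling. I will build points $p_1,p_2,\ldots\in S$ and nested subsets $S=S_0\supset S_1\supset\cdots$ as follows. At step $t$ pick any $p_t\in S_{t-1}$; by pigeonhole there is an index $i_t\in[k]$ for which the set $S_t$ of points of $S_{t-1}\setminus\{p_t\}$ at distance $d_{i_t}$ from $p_t$ satisfies $|S_t|\ge (|S_{t-1}|-1)/k$. Whenever $i_j=i_{j'}=i$ with $j<j'$, the inclusion $p_{j'}\in S_{j'-1}\subset S_j$ forces $\|p_j-p_{j'}\|_K=d_i$, so $\{p_j:i_j=i\}$ is a clique of $G_i$, and by the first step each color appears at most $3^{d+1}$ times in the sequence. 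If the process runs for $T$ steps, then $|S|=O(k^T)$.

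The main obstacle is bounding $T$ by $O(d\cdot 3^d)$, rather than the naive $O(k\cdot 3^d)$ coming from ``at most $k$ colors, each repeated at most $3^{d+1}$ times''. I expect this improvement to be geometric: the set $S_t$ lies in the intersection $\bigcap_{s\le t}(p_s+d_{i_s}\partial K)$ of $t$ Minkowski spheres, whose affine hull must drop in dimension each time a center $p_s$ introduces a genuinely new direction, and only $d$ such ``directional'' moves are possible before the configuration collapses to a lower-dimensional one on which the argument can be iterated. Combining this dimension reduction with the clique bound from the first step, and recursing on the lower-dimensional sub-configurations, should yield $|S|\le k^{O(d\cdot 3^d)}$, as required.
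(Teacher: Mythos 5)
Your greedy peeling is exactly the reduction the paper uses: the sequence $p_1,p_2,\dots$ it produces satisfies $\|p_j-p_{j'}\|_K=d_{i_j}$ for all $j<j'$, and the whole theorem then hinges on showing that such a sequence has length at most $O(3^d d)$, \emph{independently of $k$}. That bound is precisely the paper's Theorem~\ref{thm:basic}, and it is the hard part of the argument; your proposal does not prove it. What you do prove (correctly) is that each colour class of the sequence is a clique of a single distance, hence of size at most $\kappa(K)\le 3^{d+1}$ by Theorem~\ref{thm:symmetric}; but that only yields $T\le k\cdot 3^{d+1}$ and hence $|S|\le k^{O(k3^d)}$, which is weaker than the claimed $k^{O(3^d d)}$ as soon as $k\gg d$. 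The remaining step is left as the hope that ``the affine hull of $\bigcap_{s\le t}(p_s+d_{i_s}\partial K)$ must drop each time a new direction appears, and only $d$ such moves are possible,'' which is a Euclidean intuition that fails in general Minkowski spaces: spheres $p+\lambda\partial K$ can contain $(d-1)$-dimensional flat pieces (already for $K$ a cube), two such spheres can intersect in a set whose affine hull is all of $\mathbb R^d$, and nothing forces a dimension drop after each ``new'' centre, so no bound of the form $T\le d\cdot(\text{clique bound})$ follows from this mechanism. Without a proof of the length bound, the exponent $O(3^dd)$ is not established.

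For comparison, the paper gets the length bound not by dimension reduction but by a quantitative geometric argument applied directly to the configuration $\|\mathbf v_i-\mathbf v_j\|_K=\lambda_i$ ($i<j$): the indices are split into only $d$ scale classes $J_l$ according to which geometric range $\lambda_i$ falls into (with ratio parameter $\mu=2^{-1/(d-1)}$), and for each class the homothets $\mathbf v_i+\lambda_i K$ are lifted to $\mathbb R^{d+2}$ and projected to the hyperplane $h_1$, where Lemmas~\ref{lem:slab}--\ref{lem:leq2} give, for every pair, parallel hyperplanes whose width ratio in the sense of Lemma~\ref{lem:general} is at most $2/(2-\mu^{-1})$; the volume argument of Lemma~\ref{lem:general} then bounds each class by $\bigl(1+\tfrac{2}{2-\mu^{-1}}\bigr)^{d+1}$, giving $n\le d\bigl(1+\tfrac{2}{2-\mu^{-1}}\bigr)^{d+1}=O(3^dd)$. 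Some replacement for this analysis (or another proof of Theorem~\ref{thm:basic}) is what your write-up is missing.
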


Our proof is based on Theorem~\ref{thm:basic}, which seems to be of independent interest.

\begin{theo}\label{thm:basic}
Assume that $\mathbf v_1, \mathbf v_2, \dots, \mathbf v_n$ are points in a $d$-dimensional Minkowski space with an $o$-symmetric convex body $K$ as the unit ball, such that $\|\mathbf v_i-\mathbf v_j\|_K=\lambda_i$ for any $1\leqslant i<j\leqslant n$, where $\lambda_i$, $i\in[n-1]$, are some positive numbers. Then 
\[
n\leq d\left(1+\frac{2}{2-2^{1/(d-1)}}\right)^{d+1}=O(3^d d).
\]
\end{theo}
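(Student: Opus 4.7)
The plan is to reduce Theorem~\ref{thm:basic} to a pairwise intersecting Minkowski arrangement and invoke Theorem~\ref{thm:symmetric}. I first establish the key ratio bound $\lambda_j \le 2\lambda_i$ for all $1 \le i < j \le n-1$: pick any $k > j$ (which exists since $j \le n-1$); then both $\mathbf v_j$ and $\mathbf v_k$ lie on the sphere $\partial(\mathbf v_i + \lambda_i K)$ by hypothesis, and this sphere has $\|\cdot\|_K$-diameter $2\lambda_i$ because $K = -K$, so $\lambda_j = \|\mathbf v_j - \mathbf v_k\|_K \le 2\lambda_i$.

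Next I fix a scale ratio $q = 2^{1/(d-1)} \in (1, 2]$, set $\lambda_{\min} = \min_{i \le n-1} \lambda_i$, and partition $\{1, \ldots, n-1\}$ into scale classes $G_k = \{i : \lambda_i \in [q^k \lambda_{\min}, q^{k+1}\lambda_{\min})\}$ for $k \ge 0$, with $r_k = q^k \lambda_{\min}$. For any $i < j$ in $G_k \cup \{n\}$, the distance $\|\mathbf v_i - \mathbf v_j\|_K = \lambda_i$ lies in $[r_k, q r_k] \subset [r_k, 2 r_k]$, so the uniform-radius homothets $\{\mathbf v_i + r_k K : i \in G_k \cup \{n\}\}$ simultaneously satisfy the Minkowski condition $\|\mathbf v_i - \mathbf v_j\|_K \ge r_k$ and the pairwise-intersection condition $\|\mathbf v_i - \mathbf v_j\|_K \le 2 r_k$. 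Applying Theorem~\ref{thm:symmetric}---or, to obtain the precise constant in the statement, a $q$-parametrized refinement of its proof giving $(1+2/(2-q))^{d+1}$ in place of $3^{d+1}$---yields $|G_k| + 1 \le (1+2/(2-q))^{d+1}$.

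The key remaining step, which I expect to be the main obstacle, is showing that at most $d$ scale classes $G_k$ are non-empty. The choice $q^{d-1} = 2$ is tailored so that $d$ scales of ratio $q$ cover a $\lambda$-range of ratio $q^d = 2q \le 4$, so this amounts to controlling $\lambda_{\max}/\lambda_{\min}$. I would attempt this via a dimension-reduction on the $(d-1)$-dimensional sphere $\partial(\mathbf v_1 + \lambda_1 K)$ hosting $\mathbf v_2, \ldots, \mathbf v_n$: whenever the scale drops substantially, the subsequent points cluster in a small portion of this sphere and therefore lie approximately in a single tangent flat of dimension $d-1$, allowing an inductive step on $d$. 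The exponent $d-1$ in $q = 2^{1/(d-1)}$ is the natural dimension of this sphere, and the surface-area doubling under scaling by $q$ is the packing calibration one would expect to appear.

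Combining the per-scale Minkowski arrangement bound with the $d$-scale bound then gives
\[
n - 1 \;\le\; d \cdot \left(1 + \frac{2}{2-q}\right)^{d+1} \;=\; d \left(1 + \frac{2}{2 - 2^{1/(d-1)}}\right)^{d+1},
\]
which is $O(3^d d)$ since $(1 + 2/(2-q))^{d+1} \to 3^{d+1} \cdot 2^{2/3}$ as $d \to \infty$. The technical heart of the proof is the scale-count bound; the per-scale reduction to a Minkowski arrangement is essentially bookkeeping using the ratio inequality from the first step.
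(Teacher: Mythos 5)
Your first two steps are fine (the inequality $\lambda_j\le 2\lambda_i$ for $i<j$ via a third point, and the fact that each scale class $G_k\cup\{n\}$, rescaled to a common radius $r_k$, is a pairwise intersecting Minkowski arrangement), but the step you yourself flag as the heart of the argument --- that at most $d$ of the classes $G_k$ are non-empty, i.e.\ that $\lambda_{\max}/\lambda_{\min}\le q^{d}$ --- is simply false, and no dimension-reduction argument can rescue it. The hypothesis only prevents the $\lambda_i$ from \emph{increasing} by more than a factor $2$; they may \emph{decrease} arbitrarily fast. Already in the Euclidean plane take $\mathbf v_1=(0,0)$ and $\mathbf v_2,\mathbf v_3,\mathbf v_4$ on the unit circle at angles $0,\theta,-\theta$ with $\theta$ tiny: then $\|\mathbf v_1-\mathbf v_j\|=1$ for $j>1$, $\|\mathbf v_2-\mathbf v_3\|=\|\mathbf v_2-\mathbf v_4\|=2\sin(\theta/2)$, $\|\mathbf v_3-\mathbf v_4\|=2\sin\theta$, so the hypothesis holds with $\lambda_1=1$ and $\lambda_2,\lambda_3=\Theta(\theta)$, and the number of non-empty classes $G_k$ is about $\log_{q}(1/\theta)$, unbounded. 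So bounding the number of scales cannot be the route; the total scale range is genuinely uncontrolled.

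The paper's proof gets around exactly this point. It also splits by scale into $d$ parts, but each part $J_l$ is a \emph{union} of scale intervals: inside one interval the ratio of scales is at most $\mu^{-1}=2^{1/(d-1)}$, while scales from different intervals of the same $J_l$ differ by a factor greater than $2$. The pairs of the second kind are harmless for a different reason: if $\lambda_i>2\lambda_j$, the common distance cannot be $\lambda_j$ (otherwise $\mathbf v_j+\lambda_j K$ would lie in the interior of $\mathbf v_i+\lambda_i K$, contradicting that both boundaries pass through a later point such as $\mathbf v_n$, which is why the paper adjoins $\lambda_n:=\lambda_{n-1}$), so the distance is the larger $\lambda_i$ and Lemma~\ref{lem:leq2} applies. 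Consequently every pair inside one class $J_l$ has the ratio of Lemma~\ref{lem:auxlemratio} bounded by $2/(2-\mu^{-1})$, and the lifted point set is handled by Lemma~\ref{lem:general} with $\lambda=2/(2-\mu^{-1})$ --- not by quoting Theorem~\ref{thm:symmetric} as a black box, which is also why the exact constant $\bigl(1+\tfrac{2}{2-2^{1/(d-1)}}\bigr)^{d+1}$ comes out. If you want to salvage your plan, the missing idea is precisely this: you must show that pairs whose scales differ by more than a factor $2$ are automatically ``good'' and can be merged into the same class, rather than trying to bound how many scales occur.
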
 
It is important to note that M.~Nasz\'odi, J.~Pach and K.~Swanepoel \cite{NPS16} proved that if the conditions of Theorem~\ref{thm:basic} hold then $n= O(6^d (d \log d)^2)$.

For more links dealing with $k$-distance sets we refer the interested readers to \cite{S07, S16}.

One of the main ingredients of the proofs of Theorems \ref{thm:symmetric} and \ref{thm:basic} is the following simple lemma which is a generalization of the well-known Danzer-Gr\"unbaum Theorem about the maximal cardinality antipodal sets, i.e. such sets that satisfy conditions of Lemma~\ref{lem:general} when $\lambda=1$ (see~\cite{DG62} and also Lemma~7 in~\cite{NPS16}). 
\begin{lem}\label{lem:general}
Suppose that $\lambda\geq 1$ is a real number and $X=\{\mathbf x_1,\dots, \mathbf x_n\}\subset \mathbb R^d$ is a set of points such that for any $i\ne j\in [n]$ there are two distinct parallel hyperplanes $k_{i,j}$ and $k_{j,i}$ with $X\subset \conv(k_{i,j}, k_{j,i})$  and
\begin{equation}\label{eq:ratio}\frac{\dist(k_{i,j}, k_{j,i})}{\dist(g_{i,j}, g_{j,i})}\leq \lambda,\end{equation} where $g_{i,j}$ and $g_{j,i}$ are hyperplanes passing through $\mathbf x_i$ and $\mathbf x_j$ respectively and parallel to $k_{i,j}$ (and $k_{j,i}$). Then $n\leq (1+\lambda)^d$.
\end{lem}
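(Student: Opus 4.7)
The plan is to adapt the standard volume-packing proof of the Danzer--Gr\"unbaum theorem (the case $\lambda=1$), where one shrinks $C:=\conv(X)$ by the factor $\tfrac{1}{2}$ toward each $\mathbf x_i$ and shows the resulting copies pack into $C$ without overlapping interiors. For general $\lambda\geq 1$, the natural candidate is to shrink by $\tfrac{1}{1+\lambda}$: set
\[
C_i:=\tfrac{1}{1+\lambda}\,C+\tfrac{\lambda}{1+\lambda}\,\mathbf x_i,\qquad i\in[n],
\]
so that $C_i$ is the image of $C$ under the dilation with centre $\mathbf x_i$ and ratio $1/(1+\lambda)$. Since $\mathbf x_i\in C$, convexity gives $C_i\subseteq C$, and $\vol(C_i)=(1+\lambda)^{-d}\vol(C)$.

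The heart of the argument is to show that $C_i$ and $C_j$ have disjoint interiors for every pair $i\neq j$. Fixing such a pair, apply the hypothesis to obtain the parallel hyperplanes $k_{i,j},k_{j,i}$ and $g_{i,j},g_{j,i}$, and project the whole picture onto the line $\ell$ orthogonal to them. Writing $a:=\dist(k_{i,j},k_{j,i})$ and $b:=\dist(g_{i,j},g_{j,i})$, the set $C$ projects into an interval of length $a$ on $\ell$, while the projections $y_i,y_j$ of $\mathbf x_i,\mathbf x_j$ satisfy $|y_j-y_i|=b\geq a/\lambda$ thanks to \eqref{eq:ratio}. The projections of $C_i$ and $C_j$ are then two intervals of length $a/(1+\lambda)$ whose leftmost endpoints are $\lambda y_i/(1+\lambda)$ and $\lambda y_j/(1+\lambda)$; a one-line computation shows that the bound $|y_j-y_i|\geq a/\lambda$ is precisely the condition for these intervals to meet at most in an endpoint. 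Since disjointness of the one-dimensional interiors forces disjointness of the interiors of $C_i$ and $C_j$ themselves, we are done with this step.

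Combining the two observations and assuming first that $C$ has non-empty interior in $\mathbb R^d$, the volumes add up to give
\[
n\cdot(1+\lambda)^{-d}\vol(C)\;=\;\sum_{i=1}^{n}\vol(C_i)\;\leq\;\vol(C),
\]
so $n\leq(1+\lambda)^d$. The degenerate case in which $X$ lies in a proper affine subspace is handled by running the same argument inside $\aff(X)$, whose dimension $d'$ satisfies $d'<d$; the resulting bound is even smaller because $1+\lambda\geq 2$. The only real obstacle I anticipate is carrying out the disjointness step cleanly, but the formulation of Lemma~\ref{lem:general} is tailored precisely to this reduction: after projecting orthogonal to $k_{i,j}$, the statement collapses to a comparison of lengths of two explicit intervals on a line.
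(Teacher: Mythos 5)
Your proposal is correct and follows essentially the same route as the paper: shrink $\conv(X)$ toward each $\mathbf x_i$ by the factor $\frac{1}{1+\lambda}$, establish that $C_i$ and $C_j$ have disjoint interiors from the slab condition \eqref{eq:ratio} (your one-dimensional projection is the same computation the paper carries out with the sub-slabs $\conv(k_{i,j}\cup l_{i,j})$), and conclude by comparing volumes, with the degenerate case handled inside $\aff(X)$ just as in the paper. The only nitpick is that the projections of $C_i$ and $C_j$ are merely \emph{contained in} the stated intervals of length $a/(1+\lambda)$ (the projection of $C$ need not fill the whole slab), not equal to them, but the disjointness conclusion holds a fortiori.
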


Another key tool in our proofs is the lifting method developed in~\cite{N06} (see also~\cite{LN09}), where M.~Nasz\'odi showed that the maximal number of pairwise touching positive homothets of a convex body $K$ that is not necessary $o$-symmetric is at most $2^{d+1}$. We develop this method further by new ideas.

The article is organized in the following way. In Section~\ref{sec:lemgen} we prove Lemma~\ref{lem:general}. In Section~\ref{sec:auxilirylemmas} we discuss some properties of a set of pairwise intersecting homothets, which we will use in Sections~\ref{sec:proofthm1} and \ref{sec:proofthm2}, where we present the proofs of Theorem~\ref{thm:symmetric} and Theorem~\ref{thm:basic} respectively. In Section~\ref{sec:proofthm3} we prove Theorem~\ref{thm:kdistancessets} using Theorem~\ref{thm:basic}.

\section{Auxiliary Lemmas}

\subsection{Proof of Lemma~\ref{lem:general}}\label{sec:lemgen}
We may clearly assume that $P:=\conv(X)$ is a $d$-dimensional polytope in $\mathbb R^d$, otherwise $d_1:=\dim(\aff(P))<d$, i.e. by induction hypothesis, we have $n\leq (1+\lambda)^{d_1}<(1+\lambda)^d$. It is easy to see that $P_i=\mathbf x_i +\frac{1}{1+\lambda}(P-\mathbf x_i)\subset P$. Without loss of generality we assume that $\mathbf x_i$ is closer to $k_{i,j}$ than $\mathbf x_j$. We claim that $P_i$ and $P_j$ do not share a common interior point. Indeed, $P_i\subset \conv(k_{i,j}\cup l_{i,j})$, $P_j\subset \conv(k_{j,i}\cup l_{j,i})$, where $l_{i,j}=\mathbf x_i+\frac{1}{1+\lambda}(k_{j,i}-\mathbf x_i)$, $l_{j,i}=\mathbf x_j+\frac{1}{1+\lambda}(k_{i,j}-\mathbf x_j)$. Note that $\conv(k_{i,j}\cup l_{i,j})$ and $\conv(k_{j,i}\cup l_{j,i})$ do not have a common interior point because
\begin{gather*}\dist(k_{i,j},l_{i,j})+dist (k_{j,i},l_{j,i})=\\
=\dist(k_{i,j},g_{i,j})+\frac{1}{1+\lambda}\dist(g_{i,j},k_{j,i})+\dist(k_{j,i}, g_{j,i})+\frac{1}{1+\lambda}\dist(g_{j,i},k_{i,j})=\\
=\dist(k_{i,j},k_{j,i})-\dist(g_{i,j},g_{j,i})+\frac{1}{1+\lambda} \dist(k_{i,j},k_{j,i})+\frac{1}{1+\lambda}\dist(g_{i,j},g_{j,i}) \leq \dist(k_{i,j},k_{j,i}).
\end{gather*}
The last inequality holds because of \eqref{eq:ratio}.
Therefore, $\sum_{i=1}^n \vol(P_i)\leq \vol(P)$, i.e. $\frac{n}{(1+\lambda)^d}\vol(P)\leq \vol(P)$, $n\leq (1+\lambda)^d$. Lemma \ref{lem:general} is proved.
\subsection{Properties of pairwise intersecting homothets} \label{sec:auxilirylemmas}

Throughout Section \ref{sec:auxilirylemmas}, $\ell(\mathbf x, \mathbf y)$ denotes the line passing through points $\mathbf x$ and $\mathbf y$, $\ell(\mathbf x, l)$ and $h(\mathbf x, h)$ stand for the line and the $k$-dimensional flat passing through a point $\mathbf x$ and parallel to a line $l$ and to a $k$-dimensional flat $h$ respectively, we write $[\mathbf x, \mathbf y]$ for the segment with endpoints $\mathbf x$ and $\mathbf y$, $\Delta(\mathbf x, \mathbf y, \mathbf z)$ denotes the triangle with vertices $\mathbf x, \mathbf y, \mathbf z$. We write $\Delta(\mathbf x_1, \mathbf y_1, \mathbf z_1)\sim \Delta(\mathbf x_2, \mathbf y_2, \mathbf z_2)$ if the triangles $\Delta(\mathbf x_1, \mathbf y_1, \mathbf z_1)$ and $\Delta(\mathbf x_2, \mathbf y_2, \mathbf z_2)$ are similar. $(\mathbf x_1,\mathbf x_2;\mathbf x_3, \mathbf x_4)$ stands for the \emph{cross-ratio} of points $\mathbf x_1, \mathbf x_2,\mathbf x_3, \mathbf x_4$ on the real line, i.e. 
\[
(\mathbf x_1, \mathbf x_2; \mathbf x_3,\mathbf x_4)=\frac{x_1-x_3}{x_2-x_3}:\frac{x_1-x_4}{x_2-x_4},
\]
where $x_1, x_2, x_3, x_4$ are coordinates of the points $\mathbf x_1, \mathbf x_2, \mathbf x_3, \mathbf x_4$ respectively. If one of the points is the point at infinity then the two distances involving that point are dropped from the formula. Also, we will use the fact that if $\p:\mathbb R^d\to \mathbb R^d$ is a projective transformation and distinct points $\mathbf x_1, \mathbf x_2, \mathbf x_3, \mathbf x_4\in \mathbb R^d$ are collinear then $\p(\mathbf x_1), \p(\mathbf x_2), \p(\mathbf x_3), \p(\mathbf x_4)$ are also collinear and 
\[
(\p(\mathbf x_1), \p(\mathbf x_2); \p(\mathbf x_3), \p(\mathbf x_4))= (\mathbf x_1, \mathbf x_2;\mathbf x_3, \mathbf x_4).
\]

Let us identify $\mathbb R^d$ with the $d$-dimensional flat 
\[
h:=\{(x_1, \ldots, x_{d+2}) \in \mathbb R^{d+2} : x_{d+1} = 0, x_{d+2}=1\}\text{ in }\mathbb R^{d+2}
\]
and consider the following hyperplanes
\begin{gather*}h_0:=\{(x_1,  \ldots,x_{d+2}) \in \mathbb R^{d+2} : x_{d+2}=1\}\text{ in }\mathbb R^{d+2},\\
h_1:= \{(x_1, \ldots ,x_{d+2}) \in \mathbb R^{d+2} : x_{d+1} = 1\}\text{ in }\mathbb R^{d+2}.\end{gather*}
Note that $h\subset h_0$. Let $\{\mathbf e_i: i\in [d+2]\}$ be the standard basis of $\mathbb R^{d+2}$.

Let $K$ be an $o$-symmetric $d$-dimensional convex body such that 
\[K\subset h':=\{(x_1,\ldots, x_{d+2})\in \mathbb R^{d+2}:x_{d+1}=0, x_{d+2}=0\}.
\]
Note that the $d$-dimensional flat $h'$ is parallel to the $d$-dimensional flat $h$. Suppose that $\{\mathbf v_i:i\in [n]\}\subset h$ is a set of $n$ distinct vectors, $\{\lambda_i:i\in [n]\}\subset \mathbb R^+$ is a set of $n$ positive scalars and $\{\mathbf v_i+\lambda_i K:i\in [n]\}\subset h$ is a finite family of pairwise intersecting positive homothets of $K$.

Section~\ref{sec:auxilirylemmas} is organized in the following way. First, we define the set $X_0:=\{\mathbf x_i:=\mathbf v_i+\lambda_i \mathbf e_{d+1}: i\in[n]\}\subset h_0$ of $n$ points and prove some properties of $X_0$. Second, we apply on $X_0$ the central projection $\pr:h_0\to h_1$ from the origin of $\mathbb R^{d+2}$ onto the hyperplane $h_1$. Finally, we check that the image $X_1:=\{\mathbf y_i:=\pr(\mathbf x_i): i\in [n]\}\subset h_1$ of $X_0$ satisfies some properties.

\begin{figure}[h]
\centering
\definecolor{qqffff}{rgb}{0.,1.,1.}
\definecolor{ttffcc}{rgb}{0.2,1.,0.8}
\definecolor{ttffqq}{rgb}{0.2,1.,0.}
\definecolor{ffqqqq}{rgb}{1.,0.,0.}
\begin{tikzpicture}[line cap=round,line join=round,>=triangle 45,x=1.42310839331363cm,y=1.8183246678568954cm]
\clip(0.1,-0.3) rectangle (8.2,2.8);
\draw [thin] (-0.7,0.)--(9,0.);
\draw [dotted] (2.5079766724977963,0.) -- (2.5079766724977963,2.2661343660901636);
\draw [dotted] (6.078163084142602,0.) -- (6.078163084142602,1.8405246709963947);
\draw [dotted] (5.010350331838055,0.) -- (5.010350331838055,1.4201788804318947);
\begin{scriptsize}
\node [above] at (1.5,0.) {$r$};
\draw [fill=black] (0.2418423064076329,0.) circle (1.pt);
\node [below right] at (0.2418423064076329,0.) {$\mathbf v_i-\lambda_i\mathbf r$};
\draw [fill=black] (4.77411103858796,0.) circle (1.pt);
\node [below] at (4.77411103858796,0.) {$\mathbf v_i+\lambda_i\mathbf r$};
\draw [fill=black] (3.5901714514061602,0.) circle (1.pt);
\node [above left] at (3.5901714514061602,0.) {$\mathbf v_j-\lambda_j\mathbf r$};
\draw [fill=black] (6.43052921226995,0.) circle (1.pt);
\node [above right] at (6.43052921226995,0.) {$\mathbf v_j+\lambda_j\mathbf r$};
\draw [fill=black] (4.2376384131462075,0.) circle (1.pt);
\node [below left] at (4.2376384131462075,0.) {$\mathbf v'_k-\lambda_k\mathbf r$};
\draw [fill=black] (7.918687755138998,0.) circle (1.pt);
\node [below left] at (8.049635844650437,0.) {$\mathbf v'_k+\lambda_k\mathbf r$};
\draw [fill=white] (2.5079766724977963,0.) circle (1.0pt);
\node [above left] at (2.5079766724977963,0.) {$\mathbf v_i$};
\draw [fill=white] (6.078163084142602,0.) circle (1.pt);
\node [below] at (6.078163084142602,0.) {$\mathbf v'_k$};
\draw [fill=white] (5.010350331838055,0.) circle (1.0pt);
\node [above right] at (5.010350331838055,0.) {$\mathbf v_j$};
\draw  (4.51609979025661,0.) -- ++(-3.5pt,-3.5pt) -- ++(3.5pt,3.5pt)-- ++(3.5pt,-3.5pt) -- ++(-3.5pt,3.5pt) --++(10,10)-- ++ (-10,-10) -- ++ (-10,10);
\node [above] at (4.51609979025661,0.) {$\mathbf x$};
\draw  [thin, dashed] (4.77411103858796,0.) -- ++ (-10,10);
\draw  [thin, dashed]
(3.5901714514061602,0.) -- ++(10,10);
\draw [fill=black] (2.5079766724977963,2.2661343660901636) circle (1.pt);
\node [above] at (2.5079766724977963,2.2661343660901636) {$\mathbf x_i$};
\draw [fill=black] (5.010350331838055,1.4201788804318947) circle (1.pt);
\node [above] at (5.010350331838055,1.4201788804318947) {$\mathbf x_j$};
\draw [fill=black] (6.078163084142602,1.8405246709963947) circle (1.pt);
\node [above] at ( 6.078163084142602,1.8405246709963947) {$\mathbf x'_k$};
\draw (1.7,2.64) node {$b_i$};
\draw (2.3,2.64) node {$a_i$};
\draw (7.02,2.64) node {$b_j$};
\draw (6.43,2.64) node {$a_j$};
\end{scriptsize}
\end{tikzpicture}
\caption{Plane $\pi$}
\label{figure:1}
\end{figure}


Choose $i\ne j\in[n]$. Write $r:=r_{i,j}:=\ell(\mathbf v_i,\mathbf v_j)\subset h$ and let $\mathbf r:=\mathbf r_{i,j}$ and $-\mathbf r$ be the points of intersection of $\partial K$ and $\ell(\mathbf o, r)$, here we assume that the vectors $\mathbf r$ and $\mathbf v_j-\mathbf v_i$ have the same direction (see Figure~\ref{figure:1}). Denote by $f:=f_{i,j}$ a supporting hyperplane of $\mathbf v_i+ \lambda_i K$ in $h$ passing through $\mathbf v_i+\lambda_i \mathbf r$, i.e. $f$ is a $(d-1)$-dimensional flat.

Let $\mathbf v'_k:=\mathbf v'_{k,i,j}$, $\mathbf x'_k:=\mathbf x'_{k,i,j}$ and $t_k:=t_{k,i,j}:=[\mathbf v'_k-\lambda_k \mathbf r, \mathbf v'_k+\lambda_k\mathbf r]\subset r$ be the projections of $\mathbf v_k,$ $\mathbf x_k$ and $\mathbf v_k+\lambda_k K$ in the direction of $f$ onto the two-dimensional plane $\pi$, where $\pi:=\pi_{i,j}$ passes through $\mathbf v_i$, $\mathbf v_j$, $\mathbf x_i$ and $\mathbf x_j$ (see Figure~\ref{figure:1}). It follows immediately that $\mathbf v_i=\mathbf v_i'$, $\mathbf v_j=\mathbf v_j'$, $\mathbf x_i=\mathbf x_i'$, $\mathbf x_j=\mathbf x_j'$, $\mathbf x'_k=\mathbf v'_k+\lambda_k\mathbf e_{d+1}$, $t_i=[\mathbf v_i -\lambda_i \mathbf r, \mathbf v_i+\lambda_i\mathbf r]$ and $t_j=[\mathbf v_j-\lambda_j \mathbf r, \mathbf v_j+\lambda_j\mathbf r]$.

We claim that the segments $t_k$ share a common point, which we will denote as $\mathbf x:=\mathbf x_{i,j}$. Indeed, any two segments $t_p$ and $t_q$ share a common point otherwise $\{\mathbf v_p +\lambda_p K\}$ and $\{\mathbf v_q+ \lambda_q K\}$ do not intersect each other. Therefore, by Helly's theorem for $\mathbb R$, we get that $t_k$ have a common point $\mathbf x$.

Let $u_i:=u_{i,j}$ and $u_j:=u_{j,i}$ be the real numbers such that 
\begin{equation}
\label{equation:uiuj}\mathbf x-\mathbf v_i= u_i\mathbf r\text{ and }\mathbf v_j-\mathbf x=u_j\mathbf r.
\end{equation}
Set (see Figure~\ref{figure:1})
\begin{gather*}
a_i:=a_{i,j}:=\ell(\mathbf v_i+\lambda_i \mathbf r,\mathbf x_i),\ 
a_j:=a_{j,i}:=\ell(\mathbf v_j-\lambda_j \mathbf r, \mathbf x_j),\\ 
b_{i}:=b_{i,j}:=\ell(\mathbf x, a_i),\ 
b_{j}:=b_{j,i}:=\ell(\mathbf x, a_j), \\
f_{0}:=f_{0,i,j}:=h(\mathbf x, f),\ 
B_{i}:=B_{i,j}:=\aff(b_i \cup f_{0}),\  
B_{j}:=B_{j,i}:=\aff(b_j \cup f_{0}).
\end{gather*}

Note that the set $X_0$ lies in the wedge formed by $B_i$ and $B_j$ in $h_0$ that lies in the halfspace $\{(x_1,\dots, x_{d+2})\in \mathbb R^{d+2}: x_{d+1}\geq 0\}$. Indeed, points $\mathbf x'_k$ lie in the angle formed by $b_i$ and $b_j$ that lies in the halfspace $\{(x_1,\dots, x_{d+2})\in \mathbb R^{d+2}: x_{d+1}\geq 0\}$ (see Figure~\ref{figure:1}). Since $\mathbf x'_k$ are the projections of $\mathbf x_k$ in the direction of $f$ onto the plane $\pi$, the points $\mathbf x_k$ lie in the corresponding wedge formed by $B_i$ and $B_j$.

Next, we apply the central projection $\pr:h_0\to h_1$ from the origin of $\mathbb R^{d+2}$ onto the hyperplane $h_1$. The image of $h$ is the "hyperplane at infinity" in $h_1$. Therefore, we proved the following lemma.

\begin{lem}\label{lem:slab} 
$k_{i,j}:=\pr(B_i)$ and $k_{j,i}:=\pr(B_j)$ are parallel hyperplanes in $h_1$ and $X_1=\pr(X_0)$ lies in the slab $\conv(k_{i,j}\cup k_{j,i})$.
\end{lem}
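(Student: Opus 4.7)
The plan is to apply the central projection $\pr$ to the configuration constructed above and track where the relevant flats go. The key observation is that the common intersection of $B_i$ and $B_j$ lies entirely inside $h$, which $\pr$ sends to the hyperplane at infinity of $h_1$.

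First I would verify that $B_i$ and $B_j$ are $d$-dimensional hyperplanes of $h_0$ sharing exactly the flat $f_0$. By definition each $B_\bullet=\aff(b_\bullet\cup f_0)$ contains $f_0$, and the lines $b_i,b_j$ both pass through $\mathbf x$ with distinct directions $\mathbf e_{d+1}-\mathbf r$ and $\mathbf e_{d+1}+\mathbf r$ (read off from the directions of $a_i$ and $a_j$). Hence $B_i\cap B_j=f_0$, and since $\mathbf x\in h$ and $f\subset h$ we have $f_0=h(\mathbf x,f)\subset h$. Because $\pr$ is the central projection from the origin of $\mathbb R^{d+2}$ onto $h_1=\{x_{d+1}=1\}$, and $h\subset\{x_{d+1}=0\}$, every point of $h$ is mapped to infinity in $h_1$. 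Consequently $\pr(B_i)$ and $\pr(B_j)$ are hyperplanes of $h_1$ whose only common points lie at infinity, so $k_{i,j}$ and $k_{j,i}$ are parallel.

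For the second assertion, the paragraph preceding the lemma already established that $X_0$ is contained in the wedge of $h_0$ formed by $B_i$ and $B_j$ on the side $\{x_{d+1}\ge 0\}$. Each $\mathbf x_k$ has its $(d+1)$st coordinate equal to $\lambda_k>0$, so $\pr(\mathbf x_k)\in h_1$ is well-defined. A convex wedge whose edge is sent to infinity maps under a central projection to the slab bounded by the images of its two bounding hyperplanes, because $\pr$ preserves the ``between'' relation along every ray from the origin that does not meet the vanishing hyperplane $\{x_{d+1}=0\}$. Therefore $X_1=\pr(X_0)\subset\conv(k_{i,j}\cup k_{j,i})$.

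The main subtlety is the orientation issue implicit in the last step: one must check that the wedge maps to the slab between $k_{i,j}$ and $k_{j,i}$, not to the complementary (unbounded) region of $h_1$. This is exactly why the text is careful to place the wedge on the side $\{x_{d+1}\ge 0\}$, which is the side from which the central projection from the origin onto $h_1$ is regular; once this is noted, both conclusions of the lemma fall out of the projective bookkeeping above.
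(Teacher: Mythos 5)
Your proposal is correct and follows essentially the same route as the paper: the paper's proof of Lemma~\ref{lem:slab} is precisely the preceding observation that $X_0$ lies in the wedge bounded by $B_i$ and $B_j$ in $\{x_{d+1}\geq 0\}$, combined with the remark that the central projection $\pr$ sends $h$ (which contains $B_i\cap B_j=f_0$) to the hyperplane at infinity of $h_1$, so the wedge maps onto the slab between the parallel images $k_{i,j}$ and $k_{j,i}$. You merely spell out the routine verifications (directions $\mathbf e_{d+1}\mp\mathbf r$ of $b_i,b_j$, $f_0\subset h$, and the orientation of the wedge) that the paper leaves implicit.
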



\begin{figure}[h]
\centering
\begin{tikzpicture}[line cap=round,line join=round,>=triangle 45,x=2.3144752040715297cm,y=2.662693758530262cm]
\clip(2.0,-0.15) rectangle (8.2,2.55);
\draw [thin] (2.,0.)--(9,0.);
\draw [dotted] (2.5079766724977963,0.) -- (2.5079766724977963,2.2661343660901636);
\draw [dotted] (5.283689307718597,0.) -- (5.283689307718597,1.1468399045513529);
\draw (8.127711254599488,0.) -- (2.075619014630229,2.4404807756263813) ;
\draw [dotted] (2.075619014630229,2.4404807756263813)-- (2.075619014630229,0.);
\draw [dotted] (5.553955847728781,1.037856057472171)-- (5.553955847728781,0.);
\draw [dotted] (4.51609979025661,1.45) -- (4.51609979025661,0.);
\draw [fill=black] (4.51609979025661,1.457) circle (1.pt);
\draw [dashed, color=black] (2.5079766724977963,2.2661343660901636)-- (4.77411103858796,0.);
\begin{scriptsize}
\draw (2.951041993674191,0.06087503127319516) node {$r$};
\draw [fill=black] (4.77411103858796,0.) circle (1.pt);
\node [above right] at (4.77411103858796,0.) {$\mathbf v_i+\lambda_i \mathbf r$};
\draw [fill=black] (4.136849403167244,0.) circle (1.pt);
\node [above left] at (4.136849403167244,0.) {$\mathbf v_j-\lambda_j \mathbf r$};
\draw [dashed] (4.136849403167244,0.) --(5.283689307718597,1.1468399045513529); 
\draw [fill=white] (2.5079766724977963,0.) circle (1.pt);
\node [below] at (2.5079766724977963,0.) {$\mathbf v_i$};
\draw [fill=white] (5.283689307718597,0.) circle (1.pt);
\node [below] at (5.283689307718597,0.) {$\mathbf v_j$};
\draw [color=black] (4.51609979025661,0.)-- (5.553955847728781,1.037856057472171);
\draw (2.075619014630229,2.4404807756263813) --(4.51609979025661,0.);
\node [below] at (4.51609979025661,0.) {$\mathbf x$};
\draw [fill=black] (2.5079766724977963,2.2661343660901636) circle (1.pt);
\node [above] at (2.5079766724977963,2.2661343660901636) {$\mathbf x_i$};
\draw [fill=black] (5.283689307718597,1.1468399045513529) circle (1.pt);
\node at (4,1.75) {$r_0$};
\node [above] at (5.283689307718597,1.1468399045513529) {$\mathbf x_j$};
\draw [fill=black] (2.075619014630229,2.4404807756263813) circle (1.pt);
\node [above] at (2.075619014630229,2.4404807756263813) {$\mathbf z_i$};
\draw [fill=black] (5.553955847728781,1.037856057472171) circle (1.pt);
\node [above] at (5.553955847728781,1.037856057472171) {$\mathbf z_j$};
\draw [fill=black] (2.075619014630229,0.) circle (1.pt);
\node [below] at (2.075619014630229,0.) {$\mathbf w_i$};
\draw [fill=black] (5.553955847728781,0.) circle (1.pt);
\node [below] at (5.553955847728781,0.) {$\mathbf w_j$};
\node [above] at (4.51609979025661,1.457) {$\mathbf x'$};
\draw [fill=black] (8.127711254599488,0.) circle (1.pt);
\node [below] at (8.127711254599488,0.) {$\mathbf c$};
\end{scriptsize}
 \end{tikzpicture}
 \caption{Plane $\pi$}
 \label{figure:2}
\end{figure}

Denote by $\mathbf z_i:=\mathbf z_{i,j}$ and $\mathbf z_j:=\mathbf z_{j,i}$ the points of intersection of $r_0:=r_{0,i,j}=\ell(\mathbf x_i, \mathbf x_j)$ with $b_i$ (or $B_i$) and $b_i$ (or $B_j$) respectively (see Figure~\ref{figure:2}). Recall that $\mathbf y_k=\pr(\mathbf x_k)$. Let $\mathbf s_i:=\mathbf s_{i,j}=\pr(\mathbf z_i)$, $\mathbf s_j:=\mathbf s_{j,i}=\pr(\mathbf z_j)$. Of course, $\mathbf s_i$ and $\mathbf s_j$ are the points of intersection of $\ell(\mathbf y_i, \mathbf y_j)$ with $k_{i,j}$ and $k_{j,i}$ respectively because central projections preserve lines. Denote by $g_{i,j}$ and $g_{j,i}$ the hyperplanes in $h_1$ that are parallel to $k_{i,j}$ and $k_{j,i}$ and pass through $\mathbf y_i$ and $\mathbf y_j$ respectively.

\begin{lem} \label{lem:auxlemratio}
We have
\[
\frac{\dist(k_{i,j},k_{j,i})}{\dist(g_{i,j},g_{j,i})}=\frac{\|\mathbf s_i-\mathbf s_j\|}{\|\mathbf y_i-\mathbf y_j\|}=\frac{2\lambda_i\lambda_j}{\lambda_iu_j+\lambda_ju_i}.
\]
\end{lem}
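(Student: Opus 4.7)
The first equality is almost automatic. The four hyperplanes $k_{i,j},g_{i,j},g_{j,i},k_{j,i}$ in $h_1$ are mutually parallel (by Lemma~\ref{lem:slab} together with the definitions of $g_{i,j}$ and $g_{j,i}$), and the line $\ell(\mathbf y_i,\mathbf y_j)$ is a common transversal meeting them at $\mathbf s_i,\mathbf y_i,\mathbf y_j,\mathbf s_j$, respectively. Since the ratio of chord lengths cut on a common transversal through parallel hyperplanes equals the ratio of the corresponding perpendicular distances (the cosine of the angle between the transversal and a common unit normal cancels), we obtain $\|\mathbf s_i-\mathbf s_j\|/\|\mathbf y_i-\mathbf y_j\|=\dist(k_{i,j},k_{j,i})/\dist(g_{i,j},g_{j,i})$.

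For the second equality, I would exploit that the central projection $\pr$ is a projectivity, so it maps the line $r_0=\ell(\mathbf x_i,\mathbf x_j)\subset h_0$ bijectively onto $\ell(\mathbf y_i,\mathbf y_j)\subset h_1$, with $\mathbf z_i\mapsto\mathbf s_i$ and $\mathbf z_j\mapsto\mathbf s_j$. The crucial observation is that $h\subset h_0$ is sent by $\pr$ to the hyperplane at infinity of $h_1$, so the point $\mathbf c:=r_0\cap h$ (which exists and is unique when $\lambda_i\ne\lambda_j$; the limiting case $\lambda_i=\lambda_j$ makes $\pr|_{r_0}$ affine and can be treated directly) is sent to the point at infinity of $\ell(\mathbf y_i,\mathbf y_j)$. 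Choosing affine parameters $t$ on $r_0$ and $\tilde t$ on $\ell(\mathbf y_i,\mathbf y_j)$ with $\mathbf x_i,\mathbf x_j\leftrightarrow 0,1$ and $\mathbf y_i,\mathbf y_j\leftrightarrow 0,1$, respectively, the induced M\"obius change of variable is $\tilde t(t)=(1-t_c)\,t/(t-t_c)$, where $t_c$ is the parameter of $\mathbf c$; hence $\|\mathbf s_i-\mathbf s_j\|/\|\mathbf y_i-\mathbf y_j\|=|\tilde t(t_{\mathbf z_j})-\tilde t(t_{\mathbf z_i})|$.

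The remaining task is to extract $t_{\mathbf z_i},t_{\mathbf z_j},t_c$ from the planar geometry inside $\pi$. I take coordinates on $\pi$ with $\mathbf r=(1,0)$ and $\mathbf e_{d+1}=(0,1)$, so that $\mathbf v_i=(0,0)$, $\mathbf x=(u_i,0)$, $\mathbf v_j=(u_i+u_j,0)$, $\mathbf x_i=(0,\lambda_i)$, $\mathbf x_j=(u_i+u_j,\lambda_j)$, $\mathbf v_i+\lambda_i\mathbf r=(\lambda_i,0)$, and $\mathbf v_j-\lambda_j\mathbf r=(u_i+u_j-\lambda_j,0)$. The supporting lines $a_i,a_j$ then have slopes $-1$ and $+1$, respectively, so $b_i,b_j$ are the two lines of slopes $\mp 1$ through $\mathbf x$. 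Intersecting $b_i$, $b_j$, and the $x$-axis with $r_0$ (parametrised linearly from $\mathbf x_i$ to $\mathbf x_j$) yields the three parameters in closed form.

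The only real obstacle is the algebraic simplification, which hinges on one clean cancellation: after writing $s=u_i+u_j$, the numerators of both $t_{\mathbf z_i}-t_c$ and $t_{\mathbf z_j}-t_c$ collapse to the common value $-(\lambda_iu_j+\lambda_ju_i)$, while the numerator of $t_{\mathbf z_j}-t_{\mathbf z_i}$ is exactly twice that quantity. Substituting into $|\tilde t(t_{\mathbf z_j})-\tilde t(t_{\mathbf z_i})|$ and chaining the resulting cancellations (in particular the spurious $(\lambda_i-\lambda_j)^2$ and $s^2-(\lambda_i-\lambda_j)^2$ factors) produces the claimed $2\lambda_i\lambda_j/(\lambda_iu_j+\lambda_ju_i)$.
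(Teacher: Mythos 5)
Your proposal is correct, and the algebra you gesture at does close: with $s=u_i+u_j$, the intersection parameters come out as $t_{\mathbf z_i}=\frac{u_i-\lambda_i}{s+\lambda_j-\lambda_i}$, $t_{\mathbf z_j}=\frac{u_i+\lambda_i}{s+\lambda_i-\lambda_j}$, $t_c=\frac{\lambda_i}{\lambda_i-\lambda_j}$, so that $t_{\mathbf z_i}-t_c$ and $t_{\mathbf z_j}-t_c$ both have numerator $-(\lambda_iu_j+\lambda_ju_i)$, $t_{\mathbf z_j}-t_{\mathbf z_i}$ has numerator $2(\lambda_iu_j+\lambda_ju_i)$ over $s^2-(\lambda_i-\lambda_j)^2$, and $\lvert\tilde t(t_{\mathbf z_j})-\tilde t(t_{\mathbf z_i})\rvert=\lvert t_c(1-t_c)\rvert\,\frac{\lvert t_{\mathbf z_j}-t_{\mathbf z_i}\rvert}{\lvert t_{\mathbf z_j}-t_c\rvert\,\lvert t_{\mathbf z_i}-t_c\rvert}$ collapses to $\frac{2\lambda_i\lambda_j}{\lambda_iu_j+\lambda_ju_i}$; the degenerate case $\lambda_i=\lambda_j$ gives $\|\mathbf z_i-\mathbf z_j\|/\|\mathbf x_i-\mathbf x_j\|=2\lambda_i/s$, consistent with the formula. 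Your route shares the paper's projective core — $\pr$ sends $h$ to the hyperplane at infinity of $h_1$, so the affine ratio on $\ell(\mathbf y_i,\mathbf y_j)$ is controlled by the position of $\mathbf c=r_0\cap r$ — but the endgame differs: the paper stays synthetic, transporting the cross-ratios to the line $r$ by the orthogonal projection $\pr_0$, then using the similar triangles $\Delta(\mathbf c,\mathbf v_i,\mathbf x_i)\sim\Delta(\mathbf c,\mathbf w_i,\mathbf z_i)$, $\Delta(\mathbf c,\mathbf v_j,\mathbf x_j)\sim\Delta(\mathbf c,\mathbf w_j,\mathbf z_j)$ and two applications of Lemma~\ref{lem:ontrapezoid} to eliminate the auxiliary lengths $\|\mathbf w_i-\mathbf z_i\|$, $\|\mathbf w_j-\mathbf z_j\|$, whereas you coordinatize $\pi$, exploit that $a_i,a_j$ (hence $b_i,b_j$) have slopes $\mp1$ in the basis $(\mathbf r,\mathbf e_{d+1})$, and compute the three parameters on $r_0$ directly. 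Your version is shorter and needs neither Lemma~\ref{lem:ontrapezoid} nor the auxiliary points $\mathbf w_i,\mathbf w_j,\mathbf x'$, at the cost of an explicit case split at $\lambda_i=\lambda_j$ (which the paper absorbs into its convention for cross-ratios with a point at infinity); do state explicitly that all quantities you compute in the skew coordinates (line intersections in $\pi$ and ratios along single lines) are affine-invariant, so the non-orthonormal basis and the non-unit Euclidean length of $\mathbf r$ are harmless.
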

\begin{proof}
Denote by $\mathbf c$ the point of intersection $r_0$ with $r$, where, if $r_0$ and $r$ are parallel, then we consider $\mathbf c$ as the corresponding point at infinity. Let $\mathbf c':=\pr(\mathbf c)$. Since $\mathbf c\in h$, the point $\mathbf c'$ is a point at infinity. Without loss of generality we assume that points on the line $r_0$ lie in the following order: $\mathbf z_i, \mathbf x_i,\mathbf x_j,\mathbf z_j$. Denote by $\mathbf w_i$ and $\mathbf w_j$ the orthogonal projections of $\mathbf z_i$ and $\mathbf z_j$ onto the line $r$ respectively. Note that points on the line $r$ lie in the following order: $\mathbf w_i,$ $\mathbf v_i$, $\mathbf v_j$,  $\mathbf w_j$. Moreover, $\mathbf x$ must lie between $\mathbf v_j-\lambda_j\mathbf r$ and $\mathbf v_i+\lambda_i \mathbf r$.

Using the fact that $\mathbf c'$ is a point at infinity, $\{\mathbf s_i, \mathbf y_i, \mathbf y_j, \mathbf s_j, \mathbf c'\}=\pr(\{\mathbf z_i, \mathbf x_i, \mathbf x_j, \mathbf z_j, \mathbf c\})$ and $\pr_0(\{\mathbf z_i, \mathbf x_i, \mathbf x_j, \mathbf z_j, \mathbf c\})=\{\mathbf w_i,\mathbf v_i, \mathbf v_j, \mathbf w_j, \mathbf c\}$, where $\pr_0:r_0\to r$ is the orthogonal projection onto the line $r$, we easily get 
\[
\frac{\|\mathbf s_i-\mathbf s_j\|}{\|\mathbf y_i-\mathbf y_j\|}=(\mathbf s_i, \mathbf y_i; \mathbf s_j, \mathbf c')\cdot (\mathbf s_j,\mathbf y_j;\mathbf y_i,\mathbf c')=(\mathbf z_i, \mathbf x_i; \mathbf z_j, \mathbf c)\cdot (\mathbf z_j, \mathbf x_j; \mathbf x_i, \mathbf c)=
\]
\begin{equation}
\label{equation:fraction1}
=(\mathbf w_i, \mathbf v_i; \mathbf w_j, \mathbf c)\cdot (\mathbf w_j, \mathbf v_j; \mathbf v_i, \mathbf c)=
\frac{\|\mathbf w_i-\mathbf w_j\|}{\|\mathbf v_i-\mathbf v_j\|}\cdot \frac{\|\mathbf v_i-\mathbf c\|}{\|\mathbf w_j-\mathbf c\|} \cdot \frac{\|\mathbf v_j-\mathbf c\|}{\|\mathbf w_i-\mathbf c\|}.
\end{equation}
If $\mathbf c$ is not a point at infinity then using $\Delta(\mathbf c, \mathbf v_i, \mathbf x_i)\sim \Delta(\mathbf c, \mathbf w_i, \mathbf z_i)$ and $\Delta(\mathbf c, \mathbf v_j, \mathbf x_j)\sim \Delta(\mathbf c, \mathbf w_j, \mathbf z_j)$, we have
\begin{equation*}
\label{equation:fractions1}
\frac{\|\mathbf v_i-\mathbf c\|}{\|\mathbf w_i-\mathbf c\|}=\frac{\|\mathbf v_i-\mathbf x_i\|}{\|\mathbf w_i-\mathbf z_i\|}=\frac{\lambda_i}{\|\mathbf w_i-\mathbf z_i\|} \text{ and }\frac{\|\mathbf v_j-\mathbf c\|}{\|\mathbf w_j-\mathbf c\|}=\frac{\|\mathbf v_j-\mathbf x_j\|}{\|\mathbf w_j-\mathbf z_j\|}=\frac{\lambda_j}{\|\mathbf w_j-\mathbf z_j\|}.
\end{equation*}
Note that if $\mathbf c$ is a point at infinity then these equalities are obvious. Substituting the last equality into (\ref{equation:fraction1}), we get
\begin{equation}
\label{equation:fraction2}
\frac{\|\mathbf s_i-\mathbf s_j\|}{\|\mathbf y_i-\mathbf y_j\|}=\frac{\|\mathbf w_i-\mathbf w_j\|}{\|\mathbf v_j-\mathbf v_i\|}\cdot \frac{\lambda_i}{\|\mathbf w_i-\mathbf z_i\|}\cdot \frac{\lambda_j}{\|\mathbf w_j-\mathbf z_j\|}.
\end{equation}
Since $\Delta(\mathbf w_i,\mathbf  z_i, \mathbf x)\sim \Delta(\mathbf v_i,\mathbf  x_i, \mathbf v_i+\lambda_i \mathbf r)$ and $\|\mathbf v_i-\mathbf x_i\|=\lambda_i$, we get
\begin{equation}\label{equation:vverh}\frac{\|\mathbf w_i-\mathbf x\|}{\|\mathbf w_i-\mathbf z_i\|}=\frac{\|\mathbf v_i-\mathbf v_i-\lambda_i \mathbf r\|}{\|\mathbf v_i-\mathbf x_i\|}
\Leftrightarrow
\frac{\|\mathbf w_i-\mathbf x\|}{\|\mathbf r\|}=\|\mathbf w_i-\mathbf z_i\|.\end{equation}
By a similar argument, we obtain
\begin{equation}
\label{equation:vverh2}\frac{\|\mathbf x-\mathbf w_j\|}{\|\mathbf r\|}=\|\mathbf w_j-\mathbf z_j\|.
\end{equation}
From (\ref{equation:vverh}), (\ref{equation:vverh2}) and (\ref{equation:uiuj}) we conclude that
\begin{equation*}
\label{equation:fractions2}
\frac{\|\mathbf w_i-\mathbf w_j\|}{\|\mathbf v_i-\mathbf v_j\|}=\frac{\|\mathbf w_i-\mathbf z_i\|+\|\mathbf w_j-\mathbf z_j\|}{u_i+u_j}.
\end{equation*}
Substituting the last equality into (\ref{equation:fraction2}), we have
\begin{equation}
\label{equation:strahh}
\frac{\|\mathbf s_i-\mathbf s_j\|}{\|\mathbf y_i-\mathbf y_j\|}=\frac{\|\mathbf w_i-\mathbf z_i\|+\|\mathbf w_j-\mathbf z_j\|}{u_i+u_j}\cdot \frac{\lambda_i}{\|\mathbf w_i-\mathbf z_i\|}\cdot \frac{\lambda_j}{\|\mathbf w_j-\mathbf z_j\|}.
\end{equation}
Now we are ready to apply twice the following simple fact.
\begin{lem}\label{lem:ontrapezoid}
Suppose that $\mathbf a_i$ and $\mathbf b_i$ for $1\leq i\leq 3$ are points in $\mathbb R^d$ such that  
$\theta_1 (\mathbf a_1-\mathbf a_2)=\theta_2(\mathbf a_2-\mathbf a_3)$ and $\theta_1(\mathbf b_1-\mathbf b_2)=\theta_2(\mathbf b_2-\mathbf b_3)$, where $\theta_1$ and $\theta_2$ are real numbers. Then
\[
\mathbf b_2-\mathbf a_2=\frac{\theta_1}{\theta_1+\theta_2} (\mathbf b_1-\mathbf a_1)+\frac{\theta_2}{\theta_1+\theta_2}(\mathbf b_3-\mathbf a_3).
\]
\end{lem}
\begin{proof} A simple exercise.
\end{proof}
Denote by $\mathbf x'$ the point of intersection of $\ell(\mathbf x,\ell(\mathbf v_i, \mathbf x_i))$ with $r_0$ (see Figure~\ref{figure:2}). Using Lemma~\ref{lem:ontrapezoid} for $\mathbf w_i, \mathbf z_i, \mathbf x, \mathbf x', \mathbf w_j$ and $\mathbf z_j$,
we obtain 
\begin{equation}
\label{equation:kuku1}
\|\mathbf x-\mathbf x'\|=\frac{2\|\mathbf w_i-\mathbf z_i\|\|\mathbf w_j-\mathbf z_j\|}{\|\mathbf w_i-\mathbf z_i\|+\|\mathbf w_i-\mathbf z_j\|}.
\end{equation}
Using Lemma~\ref{lem:ontrapezoid} for $\mathbf v_i,\mathbf x_i, \mathbf x, \mathbf x', \mathbf v_j$ and $\mathbf x_j$, we have
\begin{equation}
\label{equation:kuku2}
\|\mathbf x-\mathbf x'\|=\frac{u_j}{u_i+u_j}\lambda_i+\frac{u_i}{u_i+u_j}\lambda_j=\frac{\lambda_i u_j+\lambda_j u_i}{u_i+u_j}.
\end{equation}
The comparison of (\ref{equation:kuku1}) and (\ref{equation:kuku2}) shows that
\begin{equation*}\label{equation:strah}
\frac{\|\mathbf w_i-\mathbf z_i\|+\|\mathbf w_j-\mathbf z_j\|}{\|\mathbf w_i-\mathbf z_i\|\|\mathbf w_j-\mathbf z_j\|}=\frac{2}{\|\mathbf x-\mathbf x'\|}=2\frac{u_i+u_j}{\lambda_i u_j+\lambda_j u_i}.
\end{equation*}
Substituting the last equality into (\ref{equation:strahh}), we get
\[
\frac{\|\mathbf s_i-\mathbf s_j\|}{\|\mathbf y_i-\mathbf y_j\|}=\frac{2\lambda_i\lambda_j}{\lambda_i u_j+\lambda_j u_i}.
\]
Lemma~\ref{lem:auxlemratio} is proved.
\end{proof}
\begin{lem}\label{lem:leq2}
If $t_i\cap t_j\subset [\mathbf v_i, \mathbf v_j]$ then 
\[
\frac{2\lambda_i\lambda_j}{\lambda_iu_j+\lambda_ju_i}\leq 2.
\]
\end{lem}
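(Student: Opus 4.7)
The plan is to reduce the inequality to a purely one-dimensional bookkeeping on the line $r=\ell(\mathbf{v}_i,\mathbf{v}_j)$. I would parametrize $r$ by signed $\mathbf{r}$-length with $\mathbf{v}_i$ at the origin, so that $\mathbf{v}_j$ sits at coordinate $D:=u_i+u_j$ (this follows from $(\mathbf{x}-\mathbf{v}_i)+(\mathbf{v}_j-\mathbf{x})=\mathbf{v}_j-\mathbf{v}_i$ using (\ref{equation:uiuj})). In these coordinates, $t_i=[-\lambda_i,\lambda_i]$, $t_j=[D-\lambda_j,D+\lambda_j]$, $[\mathbf{v}_i,\mathbf{v}_j]=[0,D]$, and $\mathbf{x}$ sits at $u_i$.

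The hypothesis $t_i\cap t_j\subset[0,D]$ constrains both endpoints of the intersection: the right endpoint, which is $\min(\lambda_i,D+\lambda_j)=\lambda_i$, must be $\leq D$, and the left endpoint, which is $\max(-\lambda_i,D-\lambda_j)=D-\lambda_j$, must be $\geq 0$. The key algebraic consequences are therefore $\lambda_i\leq u_i+u_j$ and $\lambda_j\leq u_i+u_j$. Also $u_i,u_j\geq 0$ because $\mathbf{x}\in[0,D]$.

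The desired inequality is equivalent to $\lambda_i u_j+\lambda_j u_i\geq \lambda_i\lambda_j$. Assuming without loss of generality that $\lambda_i\leq\lambda_j$, the constraint $\lambda_j\leq u_i+u_j$ rearranges to $u_j\geq\lambda_j-u_i$, whence
\[
\lambda_i u_j+\lambda_j u_i\;\geq\;\lambda_i(\lambda_j-u_i)+\lambda_j u_i\;=\;\lambda_i\lambda_j+(\lambda_j-\lambda_i)\,u_i\;\geq\;\lambda_i\lambda_j,
\]
since $\lambda_j-\lambda_i\geq 0$ and $u_i\geq 0$. Dividing yields the claim.

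I do not anticipate any genuine obstacle: once the geometric hypothesis is decoded as the pair of bounds $\lambda_i,\lambda_j\leq u_i+u_j$, a single line of algebra finishes the proof. The only point that deserves care is extracting the sharp consequence of $t_i\cap t_j\subset[\mathbf{v}_i,\mathbf{v}_j]$, namely that \emph{each} of $\lambda_i,\lambda_j$ is bounded by the gap $D=u_i+u_j$ — not the stronger (and false) $\lambda_i+\lambda_j\leq D$, which would correspond to the homothets being disjoint on $r$.
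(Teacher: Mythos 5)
Your proof is correct and is essentially the paper's argument: decode the hypothesis $t_i\cap t_j\subset[\mathbf v_i,\mathbf v_j]$ into the one-dimensional facts $u_i,u_j\geq 0$ and $\max(\lambda_i,\lambda_j)\leq u_i+u_j$, then finish with the same one-line algebraic estimate (the paper takes $\lambda_i\geq\lambda_j$ and writes $\lambda_i\lambda_j\leq(u_i+u_j)\lambda_j\leq\lambda_ju_i+\lambda_iu_j$, which is your computation with the roles of $i$ and $j$ swapped). The only cosmetic point is your assertion $\min(\lambda_i,D+\lambda_j)=\lambda_i$ before invoking the constraint; it is immediate anyway, since the right endpoint of the intersection is at most $D<D+\lambda_j$, forcing the minimum to be $\lambda_i$.
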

\begin{proof}
Without loss of generality we assume that $\lambda_i\geq \lambda_j$. Note that by definition $u_i, u_j$ are such numbers that $\mathbf x-\mathbf v_i=u_i \mathbf r$ and $\mathbf v_j-\mathbf x=u_j\mathbf r$. Thus if $x\in t_i\cap t_j\subset [\mathbf v_i, \mathbf v_j]$ then $u_i, u_j\geq 0$ and $u_i+u_j\geq\lambda_i\geq \lambda_j$, i.e. $\lambda_i\lambda_j\leq (u_i+u_j)\lambda_j\leq (\lambda_j u_i+\lambda_iu_j)$. The last inequality proves the statement of Lemma \ref{lem:leq2}.
\end{proof}
\section{Proofs of theorems}
\subsection{Proof of Theorem \ref{thm:symmetric}}\label{sec:proofthm1}
Using the notations of Section \ref{sec:auxilirylemmas}, we consider $X_1\subset h_1$, where $h_1$ is a $(d+1)$-dimensional plane. Moreover, by Lemmas \ref{lem:slab} and \ref{lem:auxlemratio} for any $i\ne j\in [n]$ there exist two parallel $d$-dimensional planes $k_{i,j}$ and $k_{j,i}$ such that $\mathbf y_k\in \conv(k_{i,j}\cup k_{j,i})$ for any $k\in [n]$ and 
\begin{equation}\label{eq:bigratio}\frac{\dist(k_{i,j},k_{j,i})}{\dist(g_{i,j}, g_{j,i})}=\frac{2\lambda_i\lambda_j}{\lambda_iu_{j}+\lambda_ju_{i}}.\end{equation}
Since these homothets form a Minkowski arrangement, we have $t_i\cap t_j\subset [\mathbf v_i, \mathbf v_j]$, i.e. by Lemma \ref{lem:leq2} we have that \eqref{eq:bigratio} is less than or equal to~$2$. Therefore, $X_1$ satisfies conditions of Lemma \ref{lem:general} with $\lambda=2$. Thus $n\leq 3^{d+1}$.

\subsection{Proof of Theorem \ref{thm:basic}}\label{sec:proofthm2} 
Consider the following family of pairwise intersecting homothets $\{\mathbf v_i+\lambda_i K:i\in [n] \}$, where $\lambda_{n}:=\lambda_{n-1}$. Without loss of generality assume that $\max_{i\in [n]} \lambda_i=1$. Let us divide the set $[n]$ into $d$ subsets. For any $l\in[d]$ we consider
\[
J_l=\{i\in [n]: \lambda_i\in \mu^{l-1} I \}, \text{ where } \mu=2^{-1/(d-1)}<1, \text{ i.e. } \mu^{d}=\mu/2, \text{ and }
\]
\[
I=I_1\cup I_2\cup I_3\cup\dots:=(\mu,1]\cup (\mu^{d+1},\mu^{d}]\cup (\mu^{2d+1},\mu^{2d}]\cup \cdots.
\]
Obviously, the $J_l$s are not pairwise intersecting sets and their union is $[n]$. We claim that
\begin{equation}
\label{eq:thmaim} |J_l|\leq \left(1+\frac{2}{2-\mu^{-1}}\right)^{d+1}
\end{equation}
Clearly, (\ref{eq:thmaim}) implies the statement of Theorem \ref{thm:basic}: 
\[
n\leq d\left(1+\frac{2}{2-\mu^{-1}}\right)^{d+1}.
\]

It is enough to prove \eqref{eq:thmaim} for $l=1$. Consider the set of homothets $\{\mathbf v_k+\lambda_kK:k\in J_1\}$. Using the notations of Section \ref{sec:auxilirylemmas}, we have that for any $i\ne j$ there exist two parallel $d$-dimensional planes $k_{i,j}$ and $k_{j,j}$ in the $(d+1)$-dimensional plane $h_1$ such that $\mathbf y_k\in \conv(k_{i,j}\cup k_{j,i})$ for any $k\in J_1$ and 
\begin{equation}\label{eq:distance}
\frac{\dist(k_{i,j}, k_{j,i})}{\dist(g_{i,j},g_{j,i})}=\frac{2\lambda_i\lambda_j}{\lambda_iu_j+\lambda_ju_i}.
\end{equation}
By Lemma \ref{lem:general}, it is enough to prove that the right hand side of \eqref{eq:distance} is at most $\frac{2}{2-\mu^{-1}}>2$. Consider two cases:

1) $i, j\in I_k$ for some $k$. Assume that $i<j$ thus $\mathbf v_j-\mathbf v_i=\lambda_i \mathbf r$. If $\lambda_i\geq \lambda_j$ then we have $t_i\cap t_j\subset [\mathbf v_i, \mathbf v_j]$, i.e. by Lemma \ref{lem:leq2}, we have that \eqref{eq:distance} is at most 2. Assume that $\lambda_j>\lambda_i$. Since $\mathbf x\in [\mathbf v_j-\lambda_j \mathbf r, \mathbf v_i+\lambda_i\mathbf r]=[\mathbf v_j-\lambda_j\mathbf r,\mathbf v_j]$, we have $u_i+u_j=\lambda_i$, $0\leq u_j\leq \lambda_j$. Therefore, using $\lambda_j/\lambda_i<\mu^{-1}$ (because $i,j\in I_k$) we have 
\[
\frac{\lambda_i u_j+\lambda_j u_i}{\lambda_i\lambda_j}=u_j\left(\frac{1}{\lambda_j}-\frac{1}{\lambda_i}\right)+\frac{u_i+u_j}{\lambda_i}\geqslant \lambda_j  \left(\frac{1}{\lambda_j}-\frac{1}{\lambda_i}\right) + 1> 2-\mu^{-1},
\]
i.e. the right hand side of \eqref{eq:distance} is at most $\frac{2}{2-\mu^{-1}}$.

2) $i\in I_k, j\in I_l$ for some $k<l$. Note that $\lambda_i>2\lambda_j$ (see the definition of $I_m$) thus it is impossible that $\mathbf v_j-\mathbf v_i=\lambda_j\mathbf r$. Indeed, in such case $\mathbf v_i+\lambda_i \partial K$ and $\mathbf v_j+\lambda_j \partial K$ do not intersect each other because of the triangle inequality, a contradiction. Therefore, $\mathbf v_j-\mathbf v_i=\lambda_i \mathbf r$, i.e. $t_i\cap t_j\subset [\mathbf v_i, \mathbf v_j]$, thus \eqref{eq:distance} is at most $2$.

Theorem \ref{thm:basic} is proved.

\subsection{Proof of Theorem \ref{thm:kdistancessets}}\label{sec:proofthm3}
Assume that there exists a $k$-distance set $\{\mathbf x_i:i\in [n]\}$ in the $d$-dimensional Minkowski space with an $o$-symmetric convex body $K$ as the unit ball, where 
\[
n=k^{f(d)},\ f(d)=\left\lfloor d\left(1+\frac{2}{2-2^{1/(d-1)}}\right)^{d+1}\right\rfloor=O(3^dd).
\]
We will construct a set $Y=\{\mathbf y_i:i\in [f(d)+1]\}$ in the same $d$-dimensional Minkowski space such that $\|\mathbf y_i-\mathbf y_j\|_K=\lambda_i$ for any $1\leq i<j\leq f(d)+1$, where $\lambda_i$ are some positive real numbers, using the following algorithm.
\begin{itemize}
\item[0.] Set $A:=[n]$, $Y:=\{\mathbf y_1:=\mathbf x_1\}$, $l:=1$.
\item[1.] Let $\lambda_l$ be a positive real number such that the cardinality of the set $$A':=\{j:\|\mathbf y_l-\mathbf x_j\}\|_K=\lambda_l, j\in A\}$$ is at least $k^{f(d)-l}$ (such $i_l$ exists because $|A|\geq k^{f(d)-l+1}$ and there are $k$ distances occurring between points of $\{\mathbf x_i:i\in A\subseteq[n]\}$). Put $A:=A'$.
\item[2.] Choose any $j\in A$ and put $\mathbf y_{l+1}:=\mathbf x_j$. Add $\mathbf y_{l+1}$ to the set $Y$.
\item[3.] If $l<f(d)$ then $l:=l+1$ and return to Step 1, else, output $Y$, and finish. 
\end{itemize}
Obviously, the existence of the set $Y$ contradicts Theorem \ref{thm:basic}, therefore, we get a contradiction with our assumption that there exists a $k$-distance set consisting of $k^{f(d)}$ points in $\mathbb R^d$.

\section*{Acknowledgment.} We are grateful to M\'arton Nasz\'odi and Konrad Swanepoel for stimulating and fruitful discussions, to anonymous referees for valuable comments that helped to significantly improve the presentation of the paper. We wish to thank one of the referees for bringing to our attention the idea to use cross-ratios in the proof of Lemma~\ref{lem:auxlemratio}. The author was partially supported by ISF grant no. 409/16, and by the Russian Foundation for Basic Research, grants \textnumero~15-31-20403 (mol\_a\_ved), \textnumero~15-01-99563 A, \textnumero~15-01-03530 A.

\end{document}